\numberwithin{equation}{section}
\newtheorem{thm}{Theorem}[section]
\newtheorem{lemma}[thm]{Lemma}
\newcommand{\ol}{\overline}
\def \f{\tilde{f}}
\def \g{\tilde{g}}
\def \h{\tilde{h}}
\def \z{\tilde{\zeta}}
\def \F{\tilde{F}}
\def \p{\partial}
\begin{document}

\title[minimal graphs]{A sharp bound for the growth of minimal graphs}

\author[Allen Weitsman]{Allen Weitsman\\ \\
D\MakeLowercase{edicated with gratitude to the memory of}
W\MakeLowercase {alter} H\MakeLowercase{ayman}.
}

\address{Email: weitsman@purdue.edu}


\begin{abstract}
We consider minimal graphs $u = u(x,y) > 0$ over unbounded domains $D \subset R^2$ bounded by a
Jordan arc $\gamma$ on which
$u = 0$. We prove a sort of reverse Phragmén-Lindelöf theorem by showing that
if $D$ contains a 
sector  $$S_{\lambda}=\{(r,\theta)=\{-\lambda /2<\theta<\lambda /2\}\quad(\pi <\lambda \leq 2\pi),$$ then the
rate of growth
is at most   $r^{\pi/\lambda}$.
\vskip .1 truein
\noindent {\bf keywords.}  minimal surface, harmonic mapping, asymptotics

\noindent{\bf MSC:} 49Q05
\end{abstract}
\maketitle
\section{Introduction} Let $D$ be an unbounded plane domain. 
  In this paper we consider the boundary value problem
for the minimal surface equation
\begin{equation}
\label{eq:bdryvalueprob}
\text{div} \frac{\nabla u}{\sqrt{1+|\nabla u|^2}}=0 
 \end{equation}
 with
 \begin{equation}
 \label{eq:bdryvalueprob1}
   u=0\quad \text{on}\ \partial D \quad \text{and } u>0 \quad \text{in}\ D.
 \end{equation}

 We shall study the constraints on growth of nontrivial solutions to (\ref{eq:bdryvalueprob})
 and (\ref{eq:bdryvalueprob1}) as determined
 by the maximum 
 $$
 M(r)={\sup}\ u(x,y),
 $$ 
where the sup is taken over the values $r=\sqrt{x^2+y^2}$ and $(x,y)\in D$.

The methods of this paper extend the results of \cite{LW}, where the following is proved.
\vskip .2 truein
\noindent
{\bf Theorem A.} 
 \emph{Suppose $D$ is a simply connected domain  whose boundary is a Jordan arc, and $D$ contains a sector $S_\lambda = \{ z: |\arg z| <\lambda /2\}$, with $\pi <\lambda \leq 2\pi $. 
 With $M(r)$ defined as above, if $u$ satisfies (\ref{eq:bdryvalueprob})
 and (\ref{eq:bdryvalueprob1}) in $D$, then there exist positive constants $K$ and $R$ such that}
\begin{equation}
\label{eq:bounds}
	 M(r) \leq K r, \quad r > R.
\end{equation}
\vskip .2truein
As in Theorem A above,  throughout this paper we shall use complex notation for convenience.

Results regarding upper and lower bounds for the growth of solutions to  (\ref{eq:bdryvalueprob})
and (\ref{eq:bdryvalueprob1}) are rather 
scarce and fragmented.  To begin with, the first relevant theorem in this direction was proved by Nitsche  \cite[p. 256]{Nitsche} who 
observed that there are no solutions to (\ref{eq:bdryvalueprob}) and (\ref{eq:bdryvalueprob1}) with
 $D$ being contained in a sector of opening strictly less than $\pi$.
For domains contained in a half plane, but not contained in any such sector, there
 are solutions to (\ref{eq:bdryvalueprob}) and (\ref{eq:bdryvalueprob1})  with differing growth rates given in \cite{LW}. 
 
   For angles $\lambda \geq
\pi$, in terms of the order
 $\rho$ of $u$ defined
 by
 $$
 \rho=\underset {r\to\infty}\lim\sup\frac{\log M(r)}{\log r},
 $$
 it follows 
 by using the module estimates of Miklyukov \cite{Miklyukov} 
 as in \cite{Weitsman2005a} that if $D$ omits a sector of opening $2\pi -\alpha $, ($\pi\leq\alpha\leq 2\pi)$,
the omitted set in the case $\alpha=2\pi$ being a line, then
the order $\rho$ of any nontrivial solution to (\ref{eq:bdryvalueprob}) and (\ref{eq:bdryvalueprob1})  is at least
$\pi/\alpha$,
More precisely, the results in \cite{Weitsman2005a} are phrased in terms of the asymptotic angle $\beta$ defined as follows.  

 Let $\Theta (r)$ be the angular measure of the set $D\cap\{|z|=r\}$ and
$$
\beta =\underset{r\to\infty}{\lim\ \sup} \ \Theta(r).  
$$
With this definition, the lower bound is given by

\vskip .2truein
\noindent
{\bf Theorem B. }  \emph{ Let $D$ be an unbounded domain whose boundary $\partial D$ is a piecewise differentiable arc,
and suppose that $u$ satisfies  (\ref{eq:bdryvalueprob}) and (\ref{eq:bdryvalueprob1}).  If $\beta \geq\pi$, then $ \rho \geq \pi /\beta$.}
\vskip .2truein
Regarding upper bounds,  it was conjectured \cite{Weitsman2005} that solutions to 
(\ref{eq:bdryvalueprob}) and (\ref{eq:bdryvalueprob1}) in general have at most exponential growth, and this is achieved by the 
horizontal catenoid.  In \cite{Weitsman2005} the following is proved.
\vskip .2truein\noindent

{\bf Theorem C. } \emph{If $u $ satisfies  (\ref{eq:bdryvalueprob}) and (\ref{eq:bdryvalueprob1}) in 
 a domain $D$ contained in a half plane and bounded by an unbounded Jordan arc,
then
$$
  Cr\leq M(r)\leq e^{Cr}\quad (r>r_0)
 $$
 for some positive constants $C$ and $r_0$. } 
 \vskip .2truein
 
The main result of this paper is the following bound for the order $\rho$ of solutions when $D$ contains
a large sector.
\begin{thm}\label{thm:bound}
 Let $D$ be a simply connected domain  whose boundary is a Jordan arc, and $D$ contains a sector $S_\lambda = \{ z: |\arg z| < \lambda /2\}$, with $\pi<\lambda\leq 2\pi $. 
If $u$ satisfies (\ref{eq:bdryvalueprob}) and (\ref{eq:bdryvalueprob1}) in $D$, then $\rho\leq \pi /\lambda$.
\end{thm}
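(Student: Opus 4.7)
I begin by parametrizing the minimal graph $\{(z,u(z)):z\in D\}$ by isothermal coordinates on a simply connected parameter domain. Since the graph is simply connected and noncompact, this domain is conformally either the disk or the upper half-plane $\Omega=\{w=\xi+i\eta:\eta>0\}$; the disk is ruled out, because a positive harmonic function on $\DD$ vanishing on $\partial\DD$ is identically zero whereas $u>0$ in $D$. In these parameters the coordinate functions $x(w),y(w),u(w)$ are harmonic. Writing $\zeta(w)=x(w)+iy(w)=h(w)+\overline{g(w)}$ with $h,g$ holomorphic on $\Omega$, $\zeta\colon\Omega\to D$ is a univalent sense-preserving harmonic mapping extending continuously to the boundary by Carath\'eodory (since $\partial D$ is a Jordan arc), and $u(w)$ is positive harmonic on $\Omega$ vanishing on $\partial\Omega=\RR$, hence $u(w)=c\eta$ for some $c>0$. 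The isothermal condition $E=G$, $F=0$ reduces to the Weierstrass-type identity $h'(w)g'(w)=c^{2}/4$.

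\textbf{Reformulation.} Since $M(r)=c\sup\{\eta:|\zeta(w)|\le r\}$, proving $\rho\le\pi/\lambda$ is equivalent to the growth estimate
\begin{equation*}
\eta\le C_{\varepsilon}\,|\zeta(w)|^{\pi/\lambda+\varepsilon}\qquad(\varepsilon>0,\ |\zeta(w)|\to\infty),
\end{equation*}
or, equivalently, $|\zeta(w)|\ge C(\im w)^{\lambda/\pi-\varepsilon}$ as $\im w\to\infty$. My approach is a modulus argument in the spirit of \cite{Miklyukov} and \cite{Weitsman2005a}. The hypothesis $D\supset S_{\lambda}$ supplies lower bounds on the conformal modulus of the annular slices $A_{n}=\{2^{n}<|z|<2^{n+1}\}\cap D$ in $D$, of order at least $\lambda/\log 2$ when the two circular arcs are taken as opposite sides (this is the modulus of the sector piece in $S_\lambda$, and extremal length is monotone in the domain). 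Pulling these back through $\zeta$ and routing through the minimal surface itself (on which $(\zeta,u)$ is conformal and moduli are preserved) transfers the lower bounds into estimates on the preimage quadrilaterals $\zeta^{-1}(A_{n})\subset\Omega$; summing across the dyadic scales then forces $\zeta^{-1}(\{|z|\le r\})$ to lie in a strip $\{0<\eta\le Cr^{\pi/\lambda+\varepsilon}\}$, giving the desired polynomial estimate.

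\textbf{Main obstacle.} The principal difficulty is that $\zeta$ is harmonic, not conformal: its Beltrami coefficient $\mu(w)=c^{2}/(4|h'(w)|^{2})$ tends to modulus one as $w\to\partial\Omega$, because the minimal surface turns vertical along $\partial D$. Consequently, direct quasiconformal pushforward of modulus between $\Omega$ and $D$ fails near the boundary, and the modulus comparison must instead be routed through the intrinsic geometry of the minimal surface itself, using that the vertical projection onto $D$ is $1$-Lipschitz together with the Weierstrass identity $h'g'=c^{2}/4$ to control the vertical stretching. Turning this comparison into the sharp exponent $\pi/\lambda$---rather than the cruder exponent $1$ obtained in \cite{LW}---is the technical heart of the proof, and is precisely the extension of the methods of \cite{LW} that the theorem requires.
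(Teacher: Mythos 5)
Your setup is correct and you correctly identify the central obstacle: the Beltrami coefficient of the harmonic parametrization, $\mu = c^{2}/(4|h'|^{2})$, has modulus tending to $1$ near $\partial\Omega$, so a direct quasiconformal modulus transfer between $\Omega$ and $D$ breaks down. But you stop there. The paragraph that is supposed to resolve the obstacle (\textbf{Main obstacle}) only restates it and names it ``the technical heart of the proof''; no argument is given, and the proposed route would not deliver the result. A modulus comparison via the $1$-Lipschitz vertical projection, in the spirit of Miklyukov and \cite{Weitsman2005a}, gives a one-sided inequality that produces the \emph{lower} bound $\rho\ge\pi/\beta$ (Theorem B). Using it for the \emph{upper} bound would require the opposite modulus inequality, which you have no mechanism to obtain. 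Furthermore, the lower bounds on $\operatorname{mod}(A_n)$ you extract from $D\supset S_\lambda$ are monotone in the wrong direction for an upper bound: making $D$ larger than $S_\lambda$ only increases those moduli, yet a larger $D$ should, if anything, make the conclusion harder, and indeed the sharpness of $\pi/\lambda$ comes from the sector, not from $D$ itself.

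The paper's actual argument supplies exactly what is missing. Lemma~\ref{preliminarygrowth} proves the key fact you need but do not have: $h'(\zeta)\to\infty$ uniformly as $\zeta\to\infty$ in $f^{-1}(S_{\lambda'})$ for every $\pi<\lambda'<\lambda$, i.e.\ the dilatation of $f$ tends to $0$ away from $\partial\Omega$. Its proof is a genuinely nontrivial asymptotic analysis of the auxiliary half-plane preimages $P_\alpha=\{\Re(e^{i\alpha}f)>0\}$ and their Riemann maps $\psi_\alpha$, ruling out the case $\psi_0'\to c>0$ by showing two overlapping asymptotic planes would be forced. With Lemma~\ref{preliminarygrowth} in hand, the theorem is proved by comparing $f$ with the explicit sector map $f_2(\zeta)=\zeta^{\lambda'/\pi}+1$: the homeomorphism $\varphi$ defined by $f_1\circ\psi=f_2\circ\varphi$ has dilatation $1/|h_1'|^2\to0$ at infinity, so Lemma~A (quasiconformal asymptotics from \cite{DW}) yields $f_1(\psi(re^{i\theta}))=r^{\lambda'/\pi+o(1)}e^{i(\lambda'\theta/\pi+o(1))}$, hence $u(z)<|z|^{\pi/\lambda'+o(1)}$ on a subsector $S_{\lambda''}$. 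Finally, the estimate is extended from $S_{\lambda''}$ to all of $D$ by the barrier comparison of Lemma~B (Hwang/Jenkins--Serrin), applied to the complementary sector of opening $2\pi-\lambda''<\pi$. None of these three ingredients---the interior blow-up of $h'$, the quasiconformal asymptotic comparison, or the barrier extension past the sector---appear in your sketch, so there is a genuine gap at exactly the point you flagged.
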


The examples given in \cite{LW} show that the theorem is sharp.  Further details regarding those
prototypes can be found in \cite{Weitsman2020}.

Note that Theorem B and Theorem 1.1 taken together imply that if $ D$ is bounded by a 
piecewise differentiable arc and is asymptotic to  a sector $S_\lambda$ with
$\lambda>\pi$, then the order of $u$ will in fact be equal to $\pi/\lambda$. 

\section{PRELIMINARIES}

Let $u(z)$ be a solution to (\ref{eq:bdryvalueprob}) and (\ref{eq:bdryvalueprob1}) over a simply connected domain $D$.
We shall  make use of the parametrization of the surface given by $u$ in isothermal coordinates using 
Weierstrass functions $\left( x(\zeta), y (\zeta ), U (\zeta) \right)$
with $\zeta$ in the right half plane $H$ and $U(\zeta) = u(f(\zeta))$,
where
\begin{equation}
\label{downstairs}
z=f(\zeta) = x(\zeta) +iy(\zeta),\quad\zeta\in H.
\end{equation}
Then $f(\zeta )$ is univalent and harmonic, and since $D$ is simply connected it can be written in
the form 
\begin{equation}
\label{decomp1}
f(\zeta) = h(\zeta) + \ol{g(\zeta)}
\end{equation}
where $h(\zeta )$ and $g(\zeta)$ are analytic in $H$, 
\begin{equation*}
|h'(\zeta)|>|g'(\zeta)|,
\end{equation*}
and
\begin{equation}
\label{decomp2}
U(\zeta )= 2\Re e\, i\int \sqrt{h'(\zeta )g'(\zeta )}\,d\zeta  .
\end{equation}
(cf. \cite{Duren}).

Now, $U(\zeta )$ is harmonic and in (\ref{decomp2}) can be taken as
 positive in $H$ and vanishing on $\partial H$.  Thus 
(cf. \cite[p. 151]{Tsuji}), 
%
$$
U(\zeta )= C\,\Re e\, \zeta,
$$
where $C$ is a positive constant.  This with
(\ref{decomp2}) gives
$$
 h'(\zeta)g'(\zeta) = - C^2/4.
$$

We may reparametrize for convenience, replacing $f(\zeta)$ and $U(\zeta)$ by
 $f(2\zeta/C)$ and $U(2\zeta/C)$.  Continuing to use $\zeta$ as the preferred variable, this means
 we may assume that
\begin{equation}
\label{height}
h'(\zeta)g'(\zeta)=-1 \ \ \textrm{and}\ \ U(\zeta)=2\Re e\,\zeta .
\end{equation}

In order to estimate the function $f(\zeta)$ in (\ref{decomp1}),  we shall use the following
lemma on quasiconformal mappings from \cite{DW} (see \cite[Lemma 5.8]{Drasin}). 
\eject

{\bf Lemma A. }  \emph{Suppose $\varphi$ is quasiconformal in the plane such that $\varphi(\infty )=\infty$, 
 and the dilatation
 $$
 \mu(z)=\varphi_{\overline z}(z)/\varphi_z(z)
$$
satisfies 
\begin{equation}
\label{Dil}
\int_0^{2\pi} |\mu (re^{i\theta })|\, d\theta \to 0\ \qquad\ ( r\to\infty).
\end{equation}
Then, in any fixed annulus $A(R)= \{R^{-1}\leq |z|\leq R\}\ \ (R>1)$,
$$
\frac{\varphi (tz)}{\varphi (t)}\to z
$$
uniformly in $A(R)$ as $0<t\to \infty$.  In particular, }
$$
|\varphi (z)| = |z|^{(1+o(1))}\ \ \ (z\to \infty).
$$
\vskip .1truein
{\bf Remark.}  For our later applications of Lemma A, note that if $r>0$ and $a$  and $b$ are in $(0,2\pi)$,
then
$$
\frac{\varphi(tre^{ia})}{\varphi(tre^{ib})}=\frac{\varphi(tre^{ia})}{\varphi(t)}\frac{\varphi(t)}{\varphi(tre^{ib})}
\to\frac{re^{ia}}{re^{ib}}=e^{i(a-b)}
$$
as $t\to\infty$, uniformly in $A(R)$.
\vskip.1truein
At the last stage we shall need a barrier argument based on the following \cite[p.827]{Hwang88}.
\vskip .1truein
{\bf Lemma B.}  
Let $u(z)$ be a solution to the minimal surface equation over a domain $\Omega$ of the form
$S_\lambda
\backslash E\ \ (\lambda<\pi)$ with $u(z)=0$ on $\partial E$
and $u(z)\leq ax^m+b\ \ (0<m<1, a,\ b \geq 0)$ for $z=x+iy
\in\partial S_\lambda$.  Then 
$u(z)\leq ax^m+b$ in $\Omega$.
\begin{proof}  Let $T_1=S_\lambda \cap \{z:\Re e\, z<1\}$  and $A>0$.  Then, there exists 
\cite[p.322]{JS}
 a solution
$V_{1,A}(z)$ to the minimal surface equation over $T_1$ with values $Ax$ on\newline  
$\partial S_\lambda\cap\{z:\Re e\, z<1\}$,  and $V_{1,A}(z)\to+\infty$ uniformly if
$\Re e\, z \to 1$ and $|\arg z|\leq\lambda'/2 <\lambda/2$ in $T_1$.  The dilations $V_{R,A}(z)=RV_{1,A}(z/R)$ have 
corresponding properties for $T_R=S_\lambda\cap \{z:\Re e\, z <R\}$.  Now, by the max/min
principle \cite[p.94]{Osserman},  $V_{R,A}(z)>Ax$ for 
$z\in S_\lambda\cap\{z:\Re e\,z<R\}$ and
 $\{V_{R,A}\}$ decreases with
$R$  on compact subsets of $S_\lambda$.  Thus, by the  monotone convergence theorem \cite[p.329]{JS}, 
 $V_{R,A}\to V_A$ on $S_\lambda$, where $V_A$ is a 
solution to the minimal surface equation with boundary values $Ax$ in a sector of opening less than
$\pi$.  Therefore  \cite[p.256]{Nitsche}, $V_A(z)\equiv Ax$ for $z=x+iy\in S_\lambda$. 

Let $u(z)$ be as in the statement of the lemma and take a fixed  $x_0>0$.  Then, for $z\in \partial S_\lambda$,
\begin{equation}\label{linear}
u(z)\leq a\left (x_0^m+mx_0^{m-1}(x-x_0)\right ) +b.
\end{equation}
Since $u(z)=0$ on $\partial E$, it follows that
\begin{equation}\label{linear1}
u(z)\leq V_{R,A}(z)+B\quad z\in \Omega\cap\{\Re e\,z<R\},
\end{equation}
where $A=amx_0^{m-1}$, and $B=ax_0^m(1-m)+b$.   Letting $R\to\infty$ in (\ref{linear1}),  it then
follows that (\ref{linear}) holds in $\Omega$.  Thus for any $z=x_0+iy$ in $\Omega$ we have 
$$
u(x_0+iy)\leq ax_0^m+b,
$$
and since $x_0$ was arbitrary, the lemma is proved. 
\end{proof}
As a final preliminary lemma, we need the following qualitative growth estimate.
\begin{lemma}\label{preliminarygrowth}
Let $u(z)$ be a solution to (\ref{eq:bdryvalueprob}) 
and (\ref{eq:bdryvalueprob1}) over a domain $D$ containing a sector $S_\lambda$
with $\lambda >\pi$, and $f(\zeta),\ h(\zeta)$,  $g(\zeta )$, and $U(\zeta)$ 
be as in (\ref{decomp1}) and  (\ref{decomp2}) corresponding to $u(z)$. Then, for any proper subsector 
$S_{\lambda' }$ with $\pi<\lambda'<\lambda$ and $D_{\lambda'}=f^{-1}(S_{\lambda'})$,
$$
h'(\zeta)\to\infty\text{ as }\zeta\to\infty
$$
uniformly for $\zeta \in D_{\lambda'}$.
\end{lemma}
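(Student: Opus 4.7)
The plan is to argue by contradiction, exploiting the observation that $\phi(\zeta):=\log|h'(\zeta)|$ is a \emph{positive} harmonic function on $H$. Indeed, $h'$ is nowhere vanishing on $H$ (from $h'g'\equiv -1$), and the hypothesis $|h'|>|g'|$ together with $|h'||g'|=1$ gives $|h'|>1$, so $\phi>0$ is harmonic on $H$.

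Suppose the conclusion fails; then there exist $M>0$ and a sequence $\zeta_n\in D_{\lambda'}$ with $|\zeta_n|\to\infty$ and $|h'(\zeta_n)|\le M$. Since the disk $B(\zeta_n,\tfrac12\Re\zeta_n)$ is contained in $H$, Harnack's inequality applied to the positive harmonic $\phi$ yields a uniform bound $|h'|\le M'=M'(M)$ on $B_n:=B(\zeta_n,\tfrac14\Re\zeta_n)$. Combined with $|f_{\bar\zeta}|=|g'|=1/|h'|\le 1$, this makes $f$ uniformly Lipschitz on $B_n$ with a constant independent of $n$.

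The contradiction should then emerge from combining this local Lipschitz control with two further ingredients: Theorem A, which via $2\Re\zeta=u(f(\zeta))\le K|f(\zeta)|$ yields $|f(\zeta_n)|\ge(2/K)\Re\zeta_n$; and the geometric constraint that $f$ maps $D_{\lambda'}$ homeomorphically onto the sector $S_{\lambda'}$ of angular opening $\lambda'>\pi$, strictly wider than the opening $\pi$ of $H$. The natural route is a rescaling argument on $F_n(w):=(f(\zeta_n+\tfrac14\Re\zeta_n\,w)-f(\zeta_n))/\Re\zeta_n$: by the uniform Lipschitz bound the $F_n$ subconverge to a Lipschitz map on the unit disk, and by tracking how the preimages of the two boundary rays $\{\arg z=\pm\lambda'/2\}$ sit inside $B_n$ one aims to force this Lipschitz limit to parametrize a set of angular extent exceeding $\pi$, which is impossible for a Lipschitz limit of maps from disks in $H$.

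I expect the main obstacle to be precisely the execution of this limiting step, i.e., ensuring that the angular-opening information of $S_{\lambda'}$ survives under rescaling and passage to the subsequential limit so as to produce a clean incompatibility with the Lipschitz bound. An alternative strategy would be to extend $f$ by an appropriate reflection to a quasiconformal map of the whole plane and then invoke Lemma A; but verifying the $L^1$ vanishing of the dilatation in (\ref{Dil}) for such an extension appears to be of comparable difficulty to the lemma itself, so the direct Harnack-and-geometry route sketched above is likely what is intended.
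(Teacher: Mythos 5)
Your first step is correct: from $h'g'\equiv-1$ and $|h'|>|g'|$ one gets $|h'|>1$, so $\log|h'|$ is positive harmonic in $H$, and Harnack gives a uniform bound on $|h'|$ (hence a Lipschitz bound on $f$, since $|f_{\bar\zeta}|=|g'|=1/|h'|$) over $B(\zeta_n,\tfrac14\Re\,\zeta_n)$ along any sequence where $|h'(\zeta_n)|$ stays bounded. But the route from there to a contradiction is not a proof, and you say so yourself. Two concrete problems. First, the fixed-comparability balls $B_n$ need not come anywhere near $f^{-1}$ of the rays $\arg z=\pm\lambda'/2$, so the rescaled maps $F_n$ and their subsequential Lipschitz limit carry no information whatsoever about the angular opening of $S_{\lambda'}$. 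Second, the obstruction you reach for --- that a Lipschitz limit of maps from disks in $H$ cannot parametrize a set of angular extent exceeding $\pi$ --- is not true as stated: a holomorphic Lipschitz map of a disk can have image of any angular extent, so some genuinely global input would have to be fed into the limit for it to bite. A third, more minor, worry: the rescaling requires $\Re\,\zeta_n\to\infty$, but the negation of the lemma permits $\zeta_n$ with $\Re\,\zeta_n$ bounded (i.e.\ $f(\zeta_n)$ drifting out along a boundary ray of $S_{\lambda'}$, where $u$ is not obviously large), and the sketch does not address this.

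The paper's proof is structurally different and does not use Harnack at all. For each $|\alpha|<\lambda/2-\pi/2$ it forms the half plane $P_\alpha=\{\zeta:\Re\,e^{i\alpha}f(\zeta)>0\}\subset H$, conjugates by a Riemann map $\psi_\alpha:H\to P_\alpha$, and notes that $\tilde h+\tilde g$ has positive real part vanishing on $\partial H$, so $\tilde h'+\tilde g'\equiv k_\alpha>0$. Combined with $h'g'=-1$, this yields the quadratic (\ref{roots}) and the closed form (\ref{eq:h'1}) for $\tilde h_\alpha'$, and the whole question collapses, via Tsuji's asymptotics $\psi_\alpha'\to c_\alpha\geq0$ in Stolz sectors, to whether the constants $c_\alpha$ vanish. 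The case $c_0>0$ is eliminated geometrically: each graph over $f(P_\alpha)$ would be asymptotic to a plane, and the planes would agree by overlap, forcing the surface to be asymptotically planar over more than a half plane with $u>0$. The case $c_0=0$ then gives $c_\alpha=0$ for all $\alpha$ via Lemma A applied to the conjugated maps $\tilde f_\alpha$ (after reflection), whence $g'/h'\to0$ and $h'g'=-1$ forces $h'\to\infty$. So your dismissed ``alternative'' --- reflect and apply Lemma A --- is actually the closer of your two ideas to the paper's argument; the trick is to apply it to $\tilde f_\alpha$, where the dilatation estimate (\ref{Dil}) can be read off explicitly from (\ref{eq:h'1}), rather than trying to verify it abstractly for $f$ itself.
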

\begin{proof}

Let $f(\zeta)$, $U(\zeta)$ be as above.
So, $u(f(\zeta)) = U(\zeta) = 2\Re e\, \zeta $.

Let $P_\alpha= \{ \zeta: \Re e\  e^{i\alpha}f(\zeta) > 0 \}\ \  (|\alpha |<\lambda /2-\pi/2)$ 
and introduce a new variable $\z$, and let $\zeta =\psi_0(\z)$ be a conformal map 
from the right half plane $H = \{\z: \Re e \,\z>0 \}$ onto $P_0$ with $\psi_0(\infty)=\infty$.

Define
\begin{equation*}
\left\{
\begin{array}{l}
\f(\z) = f(\psi_0(\z)) \\
\g(\z) = g(\psi_0(\z)) \\
\h(\z) = h(\psi_0(\z))
\end{array}\right. 
\end{equation*}

Then $\f$ is a harmonic map 
$$ \f(\z) = \h(\z) + \ol{\g(\z)},\quad (\tilde\zeta\in H)$$
satisfying
\begin{equation}
\label{dil}
|\tilde h'(\tilde \zeta )|>|\tilde g'(\tilde\zeta)|, \quad (\tilde\zeta\in H).
\end{equation}
Note $\F(\z) = \h(\z) + \g(\z)$ is an analytic function with the same real part as $\f$. 
Then $\Re e \F$ is positive in $H$ and vanishes on $\p H$, and therefore
 (see \cite[p. 151]{Tsuji})
$$
 \F(\z) = k\z+ik_0 \implies \F'(\z) =k,\qquad k>0,
$$
that is,
\begin{equation}
\label{F1}
\tilde h'(\z)+\tilde g'(\z)=k>0.
\end{equation}
Now, 
\begin{equation}
\label{equationh'}
 \h'(\z) = h'(\psi_0(\z)) \cdot \psi_0'(\z), 
 \end{equation}
and by (\ref{height}),
\begin{equation}
\label{eq:g'}
 \g'(\z) = - \frac{\psi_0'(\z)}{h'(\psi_0(\z)) } = - \frac{\psi_0'(\z)^2}{\h'(\z)}. 
\end{equation}
Combining this with (\ref{F1})) we have
\begin{equation*}
k = \h'(\z) - \frac{\psi_0'(\z)^2}{\h'(\z)}  
\end{equation*}
which implies
\begin{equation*}
 \h'(\z)^2 - k\h'(\z) - \psi_0'(\z)^2 = 0.
\end{equation*}
Thus,
\begin{equation}  \label{eq:h'} 
\h'(\z) = \frac{k\pm  \sqrt{k^2+4\psi_0'(\z)^2}}{2},\ 
\  \g'(\z)=\frac{-2\psi_0'(\z)^2}{k\pm  \sqrt{k^2+4\psi_0'(\z)^2}}.
\end{equation}

Since $\psi_0(\z)$ is a conformal map with $\Re e\, \psi_0(\z) > 0$ in $H$, there exists a real constant
$0 \leq c < \infty$ such that in any sector $S_\beta = \{ \z: |\arg \z | \leq \beta < \pi/2 \}$,
  $\psi_0'(\z) \rightarrow c$ uniformly as $\z \rightarrow \infty$ in $S_\beta$ (see \cite[p. 152]{Tsuji}).
Thus,
\begin{equation}
\label{tildeforms}
 \h'(\z) \rightarrow \frac{k\pm\sqrt{k^2+4c^2}}{2},\quad
 \g'(\z) \rightarrow \frac{-2 c^2}{k\pm\sqrt{k^2+4c^2}}, \qquad
 \zeta /\tilde\zeta \to c.
\end{equation}

If $c>0$, a simple calculation
with (\ref{tildeforms}) shows that if the minus sign in (\ref{tildeforms}) were to hold,  this
 would contradict (\ref{dil}).  In case $c=0$, with a minus sign in  (\ref{tildeforms}),
 this would imply that $\tilde h'(\z)\to 0$.
 However, (\ref{dil}) and (\ref{F1}) show that this is not possible. 
 
 Thus,  (\ref{tildeforms}) becomes
 \begin{equation}
\label{tildeforms1}
 \h'(\z) \rightarrow \frac{k+\sqrt{k^2+4c^2}}{2},\quad
 \g'(\z) \rightarrow \frac{-2 c^2}{k+\sqrt{k^2+4c^2}}, \qquad
  \zeta /\tilde\zeta \to c.
\end{equation}
 
 {\bf Case 1:} $\psi_0'(\tilde \zeta)\to c>0$ as $\tilde\zeta\to\infty$ in $S_\beta$.

Using (\ref{tildeforms1}) we have
\begin{equation}
\label{aa}
 \h(\tilde\zeta) + \ol{\g(\tilde\zeta)} =\left[ k\Re e\, \z + i  \sqrt{k^2+4c^2}\,
 \Im m\, \z\right](1+o(1))
\end{equation}
as $\tilde\zeta\to \infty$ uniformly in $S_\beta$.  From this it follows that
\begin{equation}
\label{asymptotic}
f(\zeta ) =\left[k \Re e\, \zeta /c + i \sqrt{k^2+4c^2}\,
 \Im m\, \zeta /c\right](1+o(1)))
 \end{equation}
uniformly as $\zeta \to \infty$ in proper subsectors of $H$. 
Therefore,  $P_0$ is asymptotically the half plane $H$, in the sense that for any $0<\delta<\pi$,  we have
$P_0\supset S_{\delta}\cap\{|\zeta |>R\}$ for large $R$.

By (\ref{downstairs}) and (\ref{height}),  the graph of the minimal surface is given parametrically 
by $(\Re e\, f(\zeta),\Im m\, f(\zeta),
2\Re e\,\zeta )$.  Using (\ref{asymptotic}) we then have that the surface is asymptotic to a plane
as $\zeta\to\infty$ in proper subsectors of $H$.

Consider now $P_\alpha$ as above with $\alpha\ne 0$ and let  $\psi_\alpha(\z)$ be a conformal map 
from the right half plane $H = \{\z: \Re e \,\z>0 \}$ onto $P_\alpha$ with $\psi_\alpha(\infty)=\infty$.
In this case we define
\begin{equation}
\label{alpha}
\left\{
\begin{array}{l}
\f_\alpha(\z) = e^{i\alpha}f(\psi_\alpha(\z)) \\
\g_\alpha(\z) = e^{-i\alpha}g(\psi_\alpha(\z)) \\
\h_\alpha(\z) =  e^{i\alpha}h(\psi_\alpha(\z))
\end{array}\right. 
\end{equation}
Proceding in analogy with (\ref{F1})-(\ref{asymptotic}), we have

\begin{equation}
\label {roots}
 \h_\alpha'(\z)^2 - k_\alpha\h_\alpha'(\z) - \psi_\alpha'(\z)^2 = 0
\end{equation}
and with the principal branch of the square root,
\begin{equation}  \label{eq:h'1} 
\h_\alpha'(\z) = \frac{k_\alpha +  \sqrt{k_\alpha^2+4\psi_\alpha'(\z)^2}}{2},\ 
\  \g'_\alpha(\z)=\frac{-2\psi_\alpha'(\z)^2}{k_\alpha +  \sqrt{k_\alpha^2+4\psi_\alpha '(\z)^2}}
\end{equation}
in  $H$, where the minus sign in the roots of (\ref{roots}) is eliminated as before.
  
 With $S_\beta = \{ \z: |\arg \z | \leq \beta < \pi/2 \}$, again $\psi'_\alpha(\z)\to c_\alpha\geq 0$ as
$\z\to\infty \in S_\beta$.
We wish to show that when $c_0>0$ then $c_\alpha>0$.

Suppose that $c_\alpha=0$.  We reflect $\tilde f_\alpha$
to the left half $\tilde \zeta$ plane and note that the $\beta$ corresponding to the sectors $S_\beta$ can
approach $\pi/2$.   It then follows from (\ref{eq:h'1}) that 
 Lemma A applies to $\tilde f_\alpha(\z)$.  To apply Lemma A, we note that the dilatations in $S_\beta$ tend to 0, and since $\tilde f_\alpha$ is sense preserving, the dilatations are less than $1$ outside.  So (\ref{Dil}) applies.
 
  Then for $\epsilon>0$ (see Remark following Lemma A),  
 the image $\tilde f(S_\beta)$
 covers $S_{\pi -\varepsilon}\cap \{z:|z|>R\}$
  if $\beta$ is sufficiently close to $\pi/2$ and $R$ sufficiently large.  From this and (\ref{aa}) 
it follows that if $Q=Q_{\alpha,\beta}=\tilde f(S_\beta)
\cap e^{-i\alpha}\tilde f_\alpha (S_\beta )$, then for $\beta$ close to $\pi/2$ and all large $R$,  the intersection
$Q\cap\{z: |z|=R\}$ is nonempty.

From the original analysis of $P_0$, we find that for points $\zeta\in f^{-1}(Q)$, (\ref{equationh'}) 
and (\ref{tildeforms1}) imply that $h'(\zeta )$ remains bounded.

On the other hand, from the analysis of $P_\alpha$, it follows from (\ref{alpha})
and (\ref{eq:h'1})  that $g'(\zeta)/
h'(\zeta)\to 0$ as $\zeta\to \infty$ and $ \zeta\in f^{-1}(Q)$.  This with (\ref{height}) implies that
$h'(\zeta)$ is unbounded, a contradiction. 

  Therefore, it must be that
 $c_\alpha>0$ also, and as in the case of $P_0$ above,  the graph above $f(P_\alpha)$ must
be asymptotically a plane.  Since $f(P_0)$ and $f(P_\alpha)$ overlap, these graphs must be asymptotically
the same plane, and since $f(P_0)\cup f(P_\alpha)$ extends outside a half plane with $u(z)>0$, we 
obtain a contradiction.   We conclude that Case 1 cannot occur.
\eject
{\bf Case 2:} $\psi_0'(\tilde \zeta)\to 0$ as $\tilde\zeta\to\infty$ in $S_\beta$.

As in Case 1 above, Lemma A can be used to show that for $\epsilon>0$, 
 the image $\tilde f(S_\beta)$
 covers $S_{\pi -\varepsilon}\cap \{z:|z|>R\}$ for large $R$  if $\beta$ is sufficiently close to $\pi$.  Using  an argument
 similar to Case 1, we can then deduce that the $c_\alpha$ corresponding to each $P_\alpha\ \ (|\alpha|<
 \lambda/2-\pi/2)$ 
 must also be $0$. 
 Since  $S_{\lambda'}$, with $\pi<\lambda'<\lambda$, can be covered by the union of the $\tilde f(S_\beta )$ corresponding to $P_0,\ P_\alpha,\ P_{-\alpha}$ for some $0<\alpha<\lambda/2-\pi/2$ and large $R$,
  it follows
 from  (\ref{alpha}), and (\ref{eq:h'1}) that
 $g'(\zeta)/
h'(\zeta)\to 0$.  This with (\ref{height}) implies that
$
h'(\zeta)\to\infty$ uniformly as $\zeta\to\infty$ with $\zeta\in f^{-1} (S_{\lambda'}).
$
\end{proof}

\section{Proof of Theorem \ref{thm:bound}}
\begin{proof}
For fixed $\lambda$, let $f_1(\zeta)$ denote the function in (\ref{downstairs}) corresponding to a solution
to (\ref{eq:bdryvalueprob}) 
and (\ref{eq:bdryvalueprob1}) over a domain $D$ containing $S_\lambda$.   Then for $\lambda'$ such that 
$\pi<\lambda'<\lambda$
 we define $f_2(\zeta )=
 \zeta^{\lambda'/\pi} +1$. 
  Let $\tilde S_{\lambda'} =f_2(H)$ and
$\tilde H = f_1^{-1}(\tilde S_{\lambda'} )$.  Then if $\psi(\zeta)$ is a $1-1$ conformal mapping of $H$
onto $\tilde H$ with $\psi(\infty)=\infty$,  it follows that $f_1(\psi (H))=f_2(H)$ and there exists an orientation preserving
homeomorphism $\varphi :H\to H$ with $\varphi(\infty)=\infty$ such that
\begin{equation}
\label{phidefinition}
f_1(\psi(\zeta ))=f_2(\varphi (\zeta )),\quad \zeta\in H.
\end{equation}

Differentiating (\ref{phidefinition}) with respect to $\zeta$ and $\overline\zeta$, and using
the first equality in (\ref{height}) we obtain
\begin{equation}
\label{decomp4}
\psi'(\zeta)h_1'(\psi(\zeta))=\varphi_\zeta(\zeta)f_2'(\varphi(\zeta))
\end{equation}
and
\begin{equation}
\label{decomp5}
-\overline{\frac{\psi'(\zeta)}{h_1'(\psi(\zeta))}}=\varphi_{\overline \zeta}(\zeta)f_2'(\varphi(\zeta)).
\end{equation}
Dividing (\ref{decomp5}) by (\ref{decomp4}) we have
\begin{equation}
\label{decomp6}
\frac{1}{|h_1'(\psi(\zeta))|^2}=
\left|\frac{\varphi_{\overline\zeta}(\zeta)}{\varphi_{\zeta}(\zeta)}\right|.
\end{equation}
Now, $\psi(\zeta)\to\infty$ as $\zeta\to\infty$ in $H$, so by Lemma \ref{preliminarygrowth} it follows
that the left side of (\ref{decomp6}) tends to $0$.  
 
It therefore follows from (\ref{decomp6}) and the fact that $\varphi$ is a 
sense preserving differentiable homeomorphism,  that $\varphi$ is quasiconformal
in $H$ and  that the dilatation of $\varphi$ satisfies
\begin{equation}
\label{dilatation}
\left |\frac{\varphi_{\overline \zeta}(\zeta)}{\varphi_\zeta (\zeta)}\right |\to 0, \qquad (\zeta\to\infty,\ \ \zeta\in H).
\end{equation}
The mapping $\varphi$ can then be extended by reflection to a quasiconformal mapping of the 
complex plane onto the complex plane with (\ref{dilatation}) still in force.  
As in the proof of Lemma 2.1, Lemma A is applicable 
 to $\varphi$.  Further, by the symmetry of the reflection, the conclusion of   
Lemma A can be improved to
$$\varphi(re^{i\theta})=r^{(1+o(1))}e^{i(\theta +o(1))}$$
 so that
$$
f_1(\psi(re^{i\theta}))=f_2(\varphi(re^{i\theta}))=
r^{(\lambda'/\pi +o(1))} e^{i(\lambda'\theta/\\\pi+o(1))},\quad (\zeta=re^{i\theta}\to\infty,\quad\zeta\in H).
$$ 
\vskip .1truein
From this we see that, given any $\lambda''$ such that $\pi<\lambda''<\lambda'$, there is a proper
sector $\Sigma_{\lambda''}$ in $H$ such that $f_1(\psi(\Sigma_{\lambda''}))$ covers $S_{\lambda''}
\cap\{|z|>R\}$ for large $R$.
But $\psi(\zeta)$ is a conformal mapping of $H$ into $H$, so $\psi'(\zeta)\to k$ as $\zeta\to \infty$
in $\Sigma_{\lambda''}$  for some $k\geq 0$ (cf. \cite[p. 151]{Tsuji}).
  Combining this with (\ref{height}) we conclude that for sufficiently large $z$,
\begin{equation}
\label{Sigmabound}
u(z)<|z|^{(\pi/\lambda'+o(1))},\quad z\in S_{\lambda''}.
\end{equation}
The boundary of the sector $S_{\lambda''}$ on which (\ref{Sigmabound}) holds forms an
angle in the left half plane of opening less than $\pi$.  On the boundary
of $D$ in the left half plane $u(z)=0$.  Therefore, Lemma B applied to the sector of opening $2\pi-\lambda''$
centered on the negative real axis, with $E$ being the complement of $D$, tells us that (\ref{Sigmabound})
holds in $D\backslash S_{\lambda''}$.  Thus we see that (\ref{Sigmabound}) holds on all of $D$.
 Since $\lambda'$
can be taken arbitrarily close to $\lambda$ in (\ref{Sigmabound}),  the proof is
complete.
\end{proof}
\bibliographystyle{amsplain}

\end{document}